\newtheorem{thm}{Theorem}[section]
\newtheorem{defn}[thm]{Definition}
\newtheorem{exm}[thm]{Example}
\journal{XXX}
\begin{document}
\begin{frontmatter}
\author{J. Mahanta 
}
 \ead{jm$\_$nerist@yahoo.in}
 \author{P.K. Das 
}
\ead{pkd$\_$ma@yahoo.com}

\address{Department of Mathematics\\
North Eastern Regional Institute of Science and Technology\\
Nirjuli, Arunachal Pradesh, 791 109, INDIA.\\ }

\title{ON FUZZY BOUNDARY}

\begin{abstract}
In this paper, we introduce a new type fuzzy boundary and study some related set theoretic identities. Further, this new type of fuzzy boundary is compared with different existing fuzzy boundaries.
\end{abstract}

\begin{keyword}
Fuzzy boundary \sep Fuzzy topology. 
\MSC 06D72.
\end{keyword}
\end{frontmatter}

\section{Introduction}

From general topology, we know crisp boundary of a set is the set of elements which are both in the closure of the set and closure of its complement. But in many real life situations, crisp boundary of a situation may not be well defined. For instance, boundary of ocean shared by two or more countries and boundary of the states in India to be recognized from satellite pictures etc. Such boundaries may be termed as fuzzy boundaries. Further, the importance of fuzzy boundary is found in generalization of modern control theory as discussed in \cite{4} and \cite{7}.\\

Fuzzy boundary in the context of fuzzy topological space was defined by Warren \cite{WW}, which was later modified by Cuchillo-Ibanez and Tarres in \cite{CI}. Subsequently, Pu and Lie \cite{PI} have defined fuzzy boundary as a generalization of the crisp boundary in general topology. Below are some definitions and results as discussed in \cite{CI}, \cite{PI} and \cite{WW}.

\begin{defn} \cite{WW}
Let $A$ be a fuzzy set in an FTS $(X, \tau)$. The fuzzy boundary of $A$ is defined as the infimum of all closed fuzzy sets $D$ in $X$ with the property that $D(x) \geq cl(A(x))$ for all $x \in X$ for which $(cl(A) \bigwedge cl(A^c))(x) > 0$. We shall denote such boundary by $bd_IA$.
\end{defn}

Warren verified the following properties of the fuzzy boundary:

\begin{itemize}
\item [(1)] The boundary is closed.
\item [(2)] The closure is the supremum of the interior and boundary.
\item [(3)] The boundary reduces to the usual topological boundary when all fuzzy sets are crisp. 
\item [(4)] The boundary operator is an equivalent way of defining a fuzzy topology.
\end{itemize}

However, Warren's definition lacks the following properties:

\begin{itemize}
\item [(5)] The boundary of a fuzzy set is identical to the boundary of the complement of the set.
\item [(6)] If a fuzzy set is closed (or open), then the interior of the boundary is empty.
\item [(7)] If a fuzzy set both open and closed then the boundary is empty.
\end{itemize}

\begin{defn} \cite{PI}
Fuzzy boundary of a fuzzy set $A$ is $bd_{II}A=cl(A) \bigwedge cl(A^c)$. 
\end{defn}

\begin{defn} \cite{CI}
Let $A$ be a fuzzy set in an FTS $(X, \tau)$. The fuzzy boundary of $A$ is defined as the infimum of all closed fuzzy sets $D$ in $X$ with the property that $D(x) \geq cl(A(x))$ for all $x \in X$ for which $(cl(A) - int(A))(x) > 0$. We shall denote it by $bd_{III}A$.
\end{defn}

Cuchillo-Ibanez and Tarres established that their definition of fuzzy boundary satisfies the properties (1)-(4) and (7).

Later, Athar and Ahmad (\cite{Ahmad}, \cite{Athar}) introduced and studied fuzzy semiboundary by generalizing fuzzy boundary  through fuzzy semi-closed sets.

\section{Fuzzy boundary}

In this section, we introduce a new type of fuzzy boundary using the concepts of fuzzy closure and fuzzy interior. Our aim behind introducing the new definition is to move close to exactness.

\begin{defn}
Let $A$ be a fuzzy set in an FTS $(X, \tau)$. The boundary of $A$, denoted by $bdA$ is a fuzzy set defined as $bdA = int cl A \bigwedge cl intA$. 
\end{defn}

From the definition, followings can easily be concluded:
\begin{itemize}
\item $bd0_X= 0_X$ and $bd1_X= 1_X$.
\item The boundary of a set is same as the boundary of its complement.
\item If a fuzzy set is clopen then the boundary is equal to itself.
\item Boundary is not necessarily a closed fuzzy set.
\item If the closures and interiors of two fuzzy sets are equal then their boundaries are also equal.
\end{itemize}

\begin{exm}
Let $X = \{a, b\}$ and $\tau =\{0_X, \{a_{0.8}, b_{0.4}\}, \{a_{0.3}, b_{0.2}\}, \{a_{0.3}, b_{0.4}\}, \{a_{0.2}, b_{0.2}\}, 1_X\}$ be a fuzzy topology on $X$. Then boundary of the fuzzy set $\{a_{0.4}, b_{0.3}\}$ is $\{a_{0.3}, b_{0.4}\}$, which is not fuzzy closed.
\end{exm}

\begin{thm}
For any two fuzzy sets $A$ and $B$ in an FTS $(X, \tau)$, the following results hold.
\begin{enumerate}[(i)]
\item $bd(A \bigvee B) \geq bdA \bigvee bdB$;
\item $bd(A \bigwedge B)\geq bdA \bigwedge bdB$;
\item $intA \bigvee bdA = bdA$;
\item $bdA \leq clA$;
\item If $A$ is clopen then $bd(bdA)=A$;
\item $bd(clA) \geq bdA$;
\item $bd(intA) \leq bdA$;
\item If $A \leq B$ then $bdA < bdB$;
\item $A \bigvee bdA \leq clA$;
\item If $intA= intclA$ then $bd(clA) = bdA$;
\item If $clA= clintA$ then $bd(intA) = bdA$;
\item $A$ is closed $\Leftrightarrow~ bdA \leq A$.
\end{enumerate} 
\end{thm}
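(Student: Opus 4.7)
The plan is to build everything on four elementary facts about $cl$ and $int$: (a) both are monotone; (b) $intA \le A \le clA$; (c) $cl$ and $int$ are idempotent; (d) $intA$ is the largest open set $\le A$ and $clA$ the smallest closed set $\ge A$, so whenever $U$ is open with $U\le clA$ one has $U\le intclA$, and dually for $clintA$. Every step then consists of unfolding $bdA = intclA \wedge clintA$, pushing one of $cl$ or $int$ through an inequality supplied by (b)--(d), and reassembling with $\wedge$ or $\vee$.

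\textbf{Monotonicity block.} Claims (i), (ii), (vi), (vii), (viii) all fall to the same template. For (i), apply $int\circ cl$ and $cl\circ int$ to both $A\vee B\ge A$ and $A\vee B\ge B$, then combine the four inequalities with $\wedge$ and $\vee$ in the right order. Claim (ii) is parallel, with $A\wedge B$ in place of the join. For (vi), use $cl(clA)=clA$ to see $intcl(clA)=intclA$, and use $intA\le int(clA)$ (which is (d) applied to the open set $intA\le clA$) to get $clintA\le clintclA$; the $\wedge$ of the two gives $bd(clA)\ge bdA$. Part (vii) is the dual, exploiting $int(intA)=intA$ and the inclusion $clintA\le clA$ to obtain $intclintA\le intclA$. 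Part (viii) is a one-line application of monotonicity.

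\textbf{Bound and substitution block.} Because $intA$ is open with $intA\le clA$, fact (d) yields $intA\le intclA$; trivially $intA\le clintA$, so $intA\le bdA$, which is (iii). For (iv), $bdA\le clintA\le clA$ since $intA\le A$. Claim (ix) is just (iv) joined with $A\le clA$. Claim (v) is a one-line calculation: a clopen $A$ satisfies $intA=A=clA$, hence $bdA=A$ and then $bd(bdA)=A$. For (x), the hypothesis $intA=intclA$ gives, upon applying $cl$, $clintA=clintclA$; substituting in the expansion of $bd(clA)=intcl(clA)\wedge clint(clA)=intclA\wedge clintclA$ recovers $bdA$. Claim (xi) is dual, since $clA=clintA$ forces $intclA=intclintA$.

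\textbf{The main obstacle: (xii).} The forward direction is immediate: if $clA=A$ then $intclA=intA$, so $bdA=intA\wedge clintA=intA$ because $intA\le clintA$, and then $bdA=intA\le A$. The difficulty lies in the converse. A priori, $bdA\le A$ only constrains $intclA\wedge clintA$, and if $intA=0_X$ then $clintA=0_X$ and hence $bdA=0_X\le A$ automatically, with no information about whether $clA=A$. My plan is therefore to attempt the converse by starting from $bdA\le A$, applying $cl$ to get $cl(intclA\wedge clintA)\le clA$, and trying to produce $clA$ on the left using $A\le clA$ together with idempotence of $cl$. If this chain does not close, I would either invoke an auxiliary hypothesis such as $clA=clintA$ (under which (xi) and (iv) give the implication at once) or restrict the statement to fuzzy sets whose interior is non-degenerate. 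This is the step where I expect the argument to require the most care.
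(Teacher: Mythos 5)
Your four-fact framework disposes of (i)--(xi) correctly, and in places more economically than the paper: for (i) the paper expands $int(clA\vee clB)$ and $cl(intA\vee intB)$ and then distributes the lattice operations over four terms, whereas your route through monotonicity of $bd$ (your item (viii)) gives $bd(A\vee B)\geq bdA$ and $bd(A\vee B)\geq bdB$ directly. Two cautions about the statement itself rather than your argument. First, for (ii) the monotonicity template --- yours and the paper's alike --- yields $bd(A\wedge B)\leq bdA\wedge bdB$; the paper's own displayed chain consists of two ``$\leq$'' steps and ends in ``$=bdA\wedge bdB$'', so the printed ``$\geq$'' is an error and you should not try to reverse your inequality. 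Second, (viii) can only be $bdA\leq bdB$ (take $A=B$ to kill the strict inequality). Your justification of (ix) via $A\leq clA$ is the right one; the paper's cited fact ``$A\leq bdA$'' is false in general.

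Your unease about the converse of (xii) is the substantive point, and you should trust it: the implication $bdA\leq A\Rightarrow A$ closed is false, so no refinement of the ``apply $cl$ and use idempotence'' chain can close it. Your own remark already contains the counterexample: if $intA=0_X$ then $clintA=0_X$ and $bdA=0_X\leq A$ with no constraint on $clA$. Concretely, take $X=\{p\}$ with $\tau=\{0_X,1_X\}$ and $A=0.5$; then $intA=0_X$, so $bdA=0_X\leq A$, yet $clA=1_X\neq A$. The paper's argument for this direction is invalid at two steps: from $clA\nleq A$ it infers $intclA\nleq intA$ (the interior operator does not preserve $\nleq$), and from $intclA\nleq A$ together with $clintA\nleq A$ it infers $intclA\wedge clintA\nleq A$ (a meet of two sets each exceeding $A$ somewhere need not exceed $A$ anywhere). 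Neither of your proposed repairs rescues the claim: under the auxiliary hypothesis $clA=clintA$ the crisp set $A=(0,1]$ in $\mathbb{R}$ still satisfies $clA=clintA=[0,1]$ and $bdA=(0,1)\leq A$ while failing to be closed. The honest resolution is to keep only the forward implication, which you prove correctly.
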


\begin{proof}
Let us consider fuzzy sets $A$ and $B$ in an FTS $(X, \tau)$. Then 
\begin{enumerate}[(i)]
\item \begin{eqnarray*}
bd(A \bigvee B) & = & intcl(A \bigvee B) \bigwedge clint(A \bigvee B) \\
&\geq& [int(clA \bigvee clB)] \bigwedge [cl(intA \bigvee int B)] \\
&\geq&  [intclA \bigvee intclB] \bigwedge [clintA \bigvee clint B]\\ 
&=& [intclA \bigwedge clintA] \bigvee [intclB \bigwedge clint B] \bigvee [intclA \bigwedge clint B]\\
&& \bigvee [intclB \bigwedge clintA]\\ 
&\geq& bdA \bigvee bdB. 
\end{eqnarray*}
\item \begin{eqnarray*}
bd(A \bigwedge B) & = & [intcl(A \bigwedge B)] \bigwedge [clint(A \bigwedge B)] \\
&\leq& [int(clA \bigwedge clB)] \bigwedge [cl(intA \bigwedge int B)] \\
&\leq&  [intclA \bigwedge intclB] \bigwedge [clintA \bigwedge clint B]\\ 
&=& [intclA \bigwedge clintA] \bigwedge [intclB \bigwedge clint B] \\ 
&=& bdA \bigwedge bdB. 
\end{eqnarray*}
\item \begin{eqnarray*}
intA \bigvee bdA &=& intA \bigvee [intclA \bigvee clintA]\\
&=& intclA \bigvee clintA ~~~~~~~~~~~~~~~~~~~~~~~~~~~~~ (since ~~intA \leq clintA)\\
&=& bdA.
\end{eqnarray*}
\item We know, $intclA \leq clA$ and $intA \leq A \Rightarrow clintA \leq clA$\\
i.e., $intclA \bigwedge clintA \leq clA$.
\item As $A$ is clopen, $bdA = A \Rightarrow bd(bdA) = bdA = A$.
\item \begin{eqnarray*}
bd(clA) &=& intcl(clA) \bigvee clint(clA)\\
&=& intclA \bigwedge clintclA\\
&\geq& intclA  \bigwedge clintA\\
&=& bdA.
\end{eqnarray*}
\item \begin{eqnarray*}
bd(intA) &=& intcl(intA) \bigvee clint(intA)\\
&=& intclintA \bigwedge clintA\\
&\leq& intclA  \bigwedge clintA\\
&=& bdA.
\end{eqnarray*}
\item $A \leq B \Rightarrow clA \leq clB$ and $intA \leq intB$.\\
So, $intclA \leq intclB$ and $clintA \leq clintB$.\\
$\Rightarrow bdA \leq bdB$.
\item Follows from $bdA \leq clA$ and $A \leq bdA$.
\item Straightforward.
\item Straightforward.
\item $A$ is closed $\Rightarrow clA = A \Rightarrow intclA = int A \leq A $ and $clintA \leq clA = A \Rightarrow bdA \leq A$.

On the other hand, it suffices to show $clA \leq A$. If possible let $clA \nleq A \\ \Rightarrow intclA \nleq intA \leq A \Rightarrow intclA \nleq A$\\
And $clintA \leq clA \nleq A$ implies $bdA \nleq A$, a contradiction.
\end{enumerate} 
\end{proof}

\section{Comparative study}
 
In this section, we compare the fuzzy boundary defined here with the different fuzzy boundaries which already exist.

\begin{thm}
For any fuzzy set $A, bdA \leq bd_{II}A$.
\end{thm}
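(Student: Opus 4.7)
The plan is to split the desired inequality $bdA \leq bd_{II}A = clA \wedge cl(A^c)$ into two componentwise bounds, namely $bdA \leq clA$ and $bdA \leq cl(A^c)$, and then combine them via the definition of fuzzy meet.

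The first bound, $bdA \leq clA$, is straightforward and parallels item (iv) of the previous theorem: since $int B \leq B$ for every fuzzy set $B$, applying this to $B = clA$ gives $intclA \leq clA$, so $bdA = intclA \wedge clintA \leq intclA \leq clA$. Symmetrically, $intA \leq A$ implies $clintA \leq clA$, which would serve equally well.

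The second bound, $bdA \leq cl(A^c)$, is where the real work lies. A natural first move is to invoke the standard duality $cl(A^c) = (intA)^c$, reducing the task to showing $intclA \wedge clintA \leq (intA)^c$ pointwise, i.e.\ $\min(intclA(x), clintA(x)) + intA(x) \leq 1$ at every $x \in X$. The principal obstacle is that, since $intA \leq intclA$ and $intA \leq clintA$, the value $intA(x)$ already sits below the meet, so a crude appeal to the basic chain $intB \leq B \leq clB$ yields nothing. The argument therefore has to exploit a finer interaction between the composed operators $intcl$, $clint$ and the complement — I would try a pointwise case split, bounding the meet by $intclA(x)$ where that is smaller and by $clintA(x)$ otherwise, and attempt to push each case through using the structure of $cl(A^c)$.

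Once both componentwise inequalities are in hand, the conclusion $bdA \leq clA \wedge cl(A^c) = bd_{II}A$ follows immediately from the definition of $\wedge$. The whole proof thus rests on the second bound, and the hard part will be identifying the right auxiliary inequality connecting $intclA$ or $clintA$ to $(intA)^c$.
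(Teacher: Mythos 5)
Your decomposition is exactly the paper's: both of you reduce $bd_{II}A = clA \wedge clA^c$ to the two componentwise bounds $bdA \leq clA$ and $bdA \leq clA^c$, and your argument for the first one ($intclA \leq clA$, or equally $clintA \leq clA$) matches the paper's and is fine. The gap is the second bound, which you do not actually prove: you only propose a pointwise case split and correctly note that the crude chain $intB \leq B \leq clB$ yields nothing. That gap cannot be closed, because the inequality $bdA \leq clA^c$ --- and with it the theorem --- is false. The paper's own list of elementary consequences already contains the counterexample: $bd\,1_X = intcl\,1_X \wedge clint\,1_X = 1_X$, while $bd_{II}1_X = cl(1_X) \wedge cl(0_X) = 1_X \wedge 0_X = 0_X$, so $bd\,1_X \nleq bd_{II}1_X$ in any nonempty FTS. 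A less degenerate counterexample is the crisp Sierpinski space $X=\{a,b\}$ with $\tau = \{0_X, \{a_1,b_0\}, 1_X\}$ and $A = \{a_1,b_0\}$: here $clA = clintA = intclA = 1_X$, so $bdA = 1_X$, but $clA^c = \{a_0,b_1\}$, so $bd_{II}A = \{a_0,b_1\}$. In general any fuzzy set whose interior is dense but whose complement has nonzero closure defeats the claim.

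It is worth seeing where the paper's own proof goes wrong, since it is precisely at the step you flagged as ``where the real work lies.'' The paper asserts $clintA = cl((clA^c)^c) = intclA^c \leq clA^c$, but the middle equality drops a complement: the correct duality gives $clintA = (intclA^c)^c$, from which no inequality of the form $clintA \leq clA^c$ follows. So your instinct that the second componentwise bound requires a ``finer interaction'' between $intcl$, $clint$ and the complement was sound --- no such interaction exists, your proposal is incomplete rather than wrong, and the paper's proof is complete but invalid.
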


\begin{proof}
For any fuzzy set $A, intclA \leq clA$ and $clint A = cl(clA^c)^c = intclA^c \leq clA^c$\\
So, $intclA \bigwedge clintA \leq clA \bigwedge clA^c$ i.e., $bdA \leq bd_{II}A$.
\end{proof}

\begin{thm}
For any fuzzy set $A, bdA \leq bd_{I}A$.
\end{thm}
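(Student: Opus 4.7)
Because $bd_IA$ is, by Warren's definition, the infimum of the family $\mathcal F$ of all closed fuzzy sets $D$ satisfying $D(x)\geq cl(A)(x)$ whenever $(clA\wedge clA^c)(x)>0$, my strategy is to show that $bdA$ is a pointwise lower bound of $\mathcal F$; taking the infimum on the right side then yields $bdA\leq bd_IA$ by the universal property of the infimum.

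Fix an arbitrary $D\in\mathcal F$ and an arbitrary $x\in X$. The backbone of the pointwise estimate is Theorem 2.1(iv), which already delivers $bdA=intclA\wedge clintA\leq clA$. At any $x$ where Warren's hypothesis is active, i.e.\ $(clA\wedge clA^c)(x)>0$, this at once produces $bdA(x)\leq clA(x)\leq D(x)$. At the remaining points either $clA(x)=0$---in which case $intclA(x)=0$, so $bdA(x)=0\leq D(x)$ trivially---or $clA^c(x)=0$, equivalently $intA(x)=1$. This last sub-case is the genuinely delicate one.

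The principal obstacle is precisely the regime $intA(x)=1$, where Warren's clause says nothing about $D(x)$ and a purely pointwise chain does not close. I would address it by leveraging the closedness of $D$ in concert with the duality $clintA=(intclA^c)^c$: the fact that a closed $D$ must dominate $clA$ on the set where the hypothesis bites should force the required inequality at the exceptional points $\{intA=1\}$ as well, either by a closure/lower-semicontinuity argument propagating the bound from active to inactive points, or by exhibiting $clA$ itself as the smallest admissible majorant. Once $bdA\leq D$ has been secured for every $D\in\mathcal F$, passing to the infimum produces the desired inequality $bdA\leq bd_IA$.
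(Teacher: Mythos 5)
Your reduction to showing $bdA \leq D$ for every admissible closed $D$ is the right way to read Warren's definition, and your handling of the points where $(clA \bigwedge clA^c)(x)>0$ and where $clA(x)=0$ is fine. But the sub-case you flag as ``genuinely delicate'', namely $clA^c(x)=0$ (equivalently $intA(x)=1$), is not a gap that can be filled: at such a point Warren's clause places no constraint whatever on $D(x)$, and $0_X$ is itself a closed admissible set whenever the active set is empty. Concretely, take $A=1_X$. Then $clA^c=cl\,0_X=0_X$, so $(clA \bigwedge clA^c)(x)=0$ for every $x$, every closed fuzzy set is admissible, and $bd_I1_X=0_X$; yet the paper itself records $bd\,1_X=intcl1_X \bigwedge clint1_X=1_X$. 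Hence $bd\,1_X \nleq bd_I1_X$, and the stated inequality fails exactly in the regime your argument leaves open. Neither the closedness of $D$ nor any lower-semicontinuity propagation can rescue this, since nothing forces a closed $D$ to dominate $bdA$ off the active set.

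For comparison, the paper's own proof does not argue pointwise at all: it asserts $clA \bigwedge clA^c \leq bd_IA$ (splitting globally into the cases $clA \bigwedge clA^c = 0$ and $>0$) and then implicitly invokes Theorem 3.1, i.e.\ $bdA \leq bd_{II}A = clA \bigwedge clA^c$. That route inherits the same defect, because the proof of Theorem 3.1 rests on the identity $clintA = cl((clA^c)^c) = intclA^c$, which misplaces a complement (in fact $cl((clA^c)^c) = (intclA^c)^c$), and again $A=1_X$ gives $bd\,1_X = 1_X \nleq 0_X = bd_{II}1_X$. So your instinct that all the difficulty sits at the points with $intA(x)=1$ is exactly right; the correct conclusion, however, is not that a cleverer argument is missing but that the statement needs an additional hypothesis (for instance, excluding the points where the condition in Warren's definition is vacuous), since as written it is refuted by $A=1_X$.
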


\begin{proof}
If $clA \bigwedge clA^c = 0$ then $bd_{I}A = 0_X$.

If $clA \bigwedge clA^c > 0$ then $bd_{I}A = clA$.

So, $clA \bigwedge clA^c \leq bd_{I}A$ and hence $ bdA \leq bd_{I}A$.
\end{proof}

\begin{thm}
For any fuzzy set $A, bdA \leq bd_{III}A$.
\end{thm}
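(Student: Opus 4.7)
The plan is to mirror the scheme of Theorem~3.2 (the bound $bdA \leq bd_{I}A$). Since Theorem~3.1 already gives $bdA \leq bd_{II}A = clA \bigwedge clA^c$, it suffices to establish $clA \bigwedge clA^c \leq bd_{III}A$; the desired inequality then follows by transitivity.

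I would split into two cases according to whether the triggering condition $(clA - intA)(x) > 0$ is realised. In the degenerate case where $(clA - intA)(x) = 0$ for every $x$, the defining condition on the closed fuzzy sets $D$ is vacuous, every closed fuzzy set belongs to the family, and hence $bd_{III}A = 0_X$; this is handled in parallel with the analogous edge case in the previous theorem. In the main case, there exists some $x$ with $clA(x) > intA(x)$; then the fuzzy set $clA$ itself is closed and trivially satisfies the requirement, so $clA$ lies in the family whose pointwise infimum defines $bd_{III}A$. Every admissible $D$ dominates $clA$ pointwise on the locus where the condition holds, and a pointwise comparison on that locus, together with the trivial bound $clA \bigwedge clA^c \leq clA$, yields $clA \bigwedge clA^c \leq bd_{III}A$.

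Combining these gives the chain $bdA \leq bd_{II}A = clA \bigwedge clA^c \leq bd_{III}A$, completing the proof.

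The main obstacle is the vacuous case $clA = intA$, where the infimum collapses to $0_X$; this mirrors the delicate edge point in the proof of Theorem~3.2 and is reconciled by observing that in this situation $A$ is clopen and the bound from Theorem~3.1 is preserved through the same mechanism, so no extra work is required beyond a careful transcription of that argument.
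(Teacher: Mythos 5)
Your route is genuinely different from the paper's: the paper argues directly, observing that when $clA - intA > 0_X$ the infimum defining $bd_{III}A$ collapses to $clA$, and then finishing with $bdA \leq clA$ (Theorem 2.6(iv)); it never passes through $bd_{II}$. Your detour through $bd_{II}A = clA \bigwedge clA^c$ introduces an intermediate claim, $clA \bigwedge clA^c \leq bd_{III}A$, that your argument does not and cannot establish. The admissible closed sets $D$ in the definition of $bd_{III}A$ are constrained ($D(x) \geq clA(x)$) only on the locus $L = \{x : clA(x) > intA(x)\}$; off $L$ they are unconstrained, so the infimum can be $0$ there. But off $L$ the quantity you are trying to bound need not vanish: $clA(x) = intA(x)$ forces $(clA \bigwedge clA^c)(x) = \min(clA(x),\, 1 - intA(x))$, which is, say, $1/2$ when $clA(x) = intA(x) = 1/2$. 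This is exactly where the analogy with Theorem 3.2 breaks down: for $bd_I$ the triggering condition is $(clA \bigwedge clA^c)(x) > 0$, so off its locus the left-hand side is itself $0$ and the pointwise comparison closes; for $bd_{III}$ the triggering condition is a different one, and the ``careful transcription'' you propose does not transfer.

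The vacuous case is also not dispatched. When $clA = intA$ everywhere, $A$ is clopen and, as you correctly note, $bd_{III}A = 0_X$; but then $bdA = intclA \bigwedge clintA = A$, so the claimed inequality reads $A \leq 0_X$. Asserting that ``no extra work is required'' because the bound from Theorem 3.1 is ``preserved through the same mechanism'' does not bridge this: you would need $A = 0_X$, which is false for a general clopen fuzzy set (this is precisely the situation isolated by property (7) of $bd_{III}$). The paper's own treatment of this case is itself a non sequitur (it infers $bdA \leq bd_{III}A$ from $bdA = A$ and $bd_{III}A \leq clA$), so you are in good company, but your write-up must either resolve this case honestly or flag it as a genuine obstruction rather than wave it away.
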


\begin{proof}
We have, if $clA  - intA > 0_X$ then $bd_{III}A = clA$.

If $clA - intA = 0_X$ then $A$ is clopen, so $bdA=A$. But we always have $bd_{III}A \leq clA$.
Therefore, $bdA \leq bd_{III}A$.
\end{proof}
 
\section{Conclusion}

In this paper, we introduced a new type of fuzzy boundary and studied a few properties of this fuzzy boundary. It is also concluded that, this fuzzy boundary is the smallest among different existing fuzzy boundaries. However, the components of exactness will be more in case of this new fuzzy boundary in fuzzy topological space. \\

{\bf{References}}

\end{document}